 \newtheorem{thm}{Theorem}[section]
 \newtheorem{prop}[thm]{Proposition}
 \newtheorem{cor}[thm]{Corollary}
 \newtheorem{lemma}[thm]{Lemma}
\theoremstyle{definition}
 \newtheorem{defi}[thm]{Definition}
 \newtheorem{rem}[thm]{Remark}
\newcommand{\R}{\mathbb{R}}
\newcommand{\kth}[1]{\ifthenelse{\equal{#1}{1}}{$#1^\text{st}$}{\ifthenelse{\equal{#1}{2}}{$#1^\text{nd}$}{\ifthenelse{\equal{#1}{3}}{$#1^\text{rd}$}{$#1^\text{th}$}}}}
\newcommand{\Leb}[2]{\ifthenelse{\equal{#2}{0}}{\mathrm{L}^{2}\left(X;\wedge^{#1}T^*X\right)}{\mathrm{L}^{2}_{#2}\left(X;\wedge^{#1}T^*X\right)}}
\newcommand{\LebK}[2]{\ifthenelse{\equal{#2}{0}}{\mathrm{L}^{2}\left(K;\wedge^{#1}T^*X\right)}{\mathrm{L}^{2}_{#2}\left(K;\wedge^{#1}T^*X\right)}}
\newcommand{\Cinf}[1]{\ifthenelse{\equal{#1}{0}}{\mathcal{C}^\infty\left(X;\R\right)}{\mathcal{C}^\infty\left(X;\wedge^{#1}T^*X\right)}}
\newcommand{\Cinfc}[1]{\ifthenelse{\equal{#1}{0}}{\mathcal{C}^\infty_{\mathrm{c}}\left(X;\R\right)}{\mathcal{C}^\infty_{\mathrm{c}}\left(X;\wedge^{#1}T^*X\right)}}
\newcommand{\Sob}[3]{\ifthenelse{\equal{#2}{0}}{\mathrm{W}^{#3, 2}\left(X;\wedge^{#1}T^*X\right)}{\mathrm{W}^{#3, 2}_{#2}\left(X;\wedge^{#1}T^*X\right)}}
\newcommand{\SobK}[3]{\ifthenelse{\equal{#2}{0}}{\mathrm{W}^{#3, 2}\left(K;\wedge^{#1}T^*X\right)}{\mathrm{W}^{#3, 2}_{#2}\left(K;\wedge^{#1}T^*X\right)}}
\title[A characterization of projectively flat manifolds]{Positive projectively flat manifolds are locally conformally flat-K\"ahler Hopf manifolds}
\author{Simone Calamai}
\address{(S.~Calamai) Dip. di Matematica e Informatica ``U. Dini'' - Universit\`a di Firenze \endgraf Viale Morgagni 67A -  Firenze - Italy}
\email{simocala at gmail.com}
 \keywords{Projectively flat, locally conformally flat-K\"ahler, Boothby metric}
 \subjclass[2010]{Primary: 53C07. Secondary: 53C55.}
\begin{document}

\begin{abstract}
We define a partition of the space of projectively flat metrics in three classes according to the sign of the Chern scalar curvature; we prove that 
the class of negative projectively flat metrics is empty, and that the class of positive projectively flat metrics consists precisely of 
locally conformally flat-K\"ahler metrics on Hopf manifolds, explicitly characterized by Vaisman \cite{vaisman}. 
Finally, we review the known characterization and properties of zero projectively flat metrics.
As applications, we make sharp a list of possible projectively flat metrics by Li, Yau, and Zheng \cite[Theorem 1]{liyauzhengcag94}; moreover
we prove that projectively flat astheno-K\"ahler metrics are in fact K\"ahler and globally conformally flat.
\end{abstract}

\maketitle

\section*{Introduction}
A projectively flat metric $\omega$ on a given compact complex manifold $M$ of complex dimension $n$ is in particular a 
 Hermitian Yang Mills metric.
The latter are solutions of the equation $\Lambda_g F_h = \gamma \cdot Id_E$, where $(E,h)\rightarrow (M,g)$ 
is a complex rank $r$ Hermitian holomorphic vector bundle, $\gamma$ is a real valued function on $M$, $F_h$ is the Chern-curvature of $h$, and $\Lambda_\omega F_h$ is the mean curvature.
In that environment, projectively flat metrics are solutions of the equation $F_h = \frac{1}{r} tr_{h}F_h \cdot Id_E$. 
 Our present concern consists of the case when $E$ is the holomorphic tangent bundle $T_M$, so hereafter by projectively flat metric we mean
 a solution of the equation 
 \[
 F_h= \frac{1}{n} tr_h F_h \cdot Id_{T_M} \; .
 \]
Hermitian Yang Mills metrics are in bijection to stable vector bundles via the Kobayashi-Hitchin correspondence thanks to the works of 
Donaldson for algebraic surfaces \cite{donaldsonalgebraicsurfaces} and manifolds \cite{donaldsonalgebraicmanifolds}, Uhlenbeck Yau for K\"ahler
manifolds \cite{uhlenbeckyau}, Buchdahl \cite{buchdahl} for surfaces, and Li Yau \cite{liyau} for Hermitian manifolds.
 In particular, uniqueness theorems of that theory yield that there is at most one projectively flat metric on a given compact complex manifold; 
 we should emphasize here that, since a globally conformal metric of a projectively flat metric is again projectively flat, 
 we understand uniqueness modulo global conformal transformations of the metric.
In complex dimension one, every compact Riemann surface $S$ is projectively flat, and since all Hermitian metrics on 
$S$ are conformal to each other, we can think of any such metric to be projectively flat.

 In complex dimension two there is a complete understanding of projectively flat metrics as well, 
 which is presented for instance in \cite[page 180]{lubketeleman}. In fact, projectively flat complex surfaces are precisely those complex surfaces 
 admitting a locally conformally flat-K\"ahler metric: on their turn, the latter are either K\"ahler flat surfaces 
 (which are classified in \cite{dekimpe}) 
 or locally conformally flat-K\"ahler Hopf surfaces (which are explicitly characterized by Vaisman \cite{vaisman}).
 
 For higher complex dimension the previous picture fails; 
 in fact generalized Iwasawa manifolds admit Chern-flat (and hence, projectively flat) 
 metric \cite{balas,corderofernandezgray,matsuo96} which is not locally conformally flat-K\"ahler.
 (This can be seen as follows: Vaisman \cite{vaisman}, as described in \cite[page 180]{lubketeleman},
 proved that the only locally conformally K\"ahler flat metrics are either Hopf or K\"ahler flat, and clearly Iwasawa aren't such.) This leads  
 to a partition  of the family of projectively flat metrics in classes that have both 
 a geometrical meaning and for some of which we can get a complete description.
 
 Whence, we consider the conformal class of a projectively flat metric $\{h\}$ and we look at the sign of its Gauduchon degree. 
 By the Gauduchon conformal method \cite{angellacalamaispotti,balas,gauduchon,yang}, 
 there exists in $\{h\}$ a representative whose scalar curvature with respect to the Chern connection
 has definite sign which is the same as the sign of the Gauduchon degree. 
  This leads to the natural definition  of positive (respectively zero, or negative) projectively flat metric
  (Definition \ref{defi:negzeroposprojflat}).
 
 We are going to prove, in Theorem \ref{thm:positiveprojectivelyflat}, that the class of positive projectively flat metrics consists precisely of 
 locally conformally flat-K\"ahler metrics on Hops manifolds 
 (which are classified with explicit description by Vaisman \cite{vaisman}).
 A key step for getting the result is Lemma \ref{lemma:pfimpliesbalancedorlcK}, for which we give a proof alternative to
 the original argument of Li, Yau, and Zheng \cite[Lemma 2]{liyauzhengcag94}.
 
 Concerning negative projectively flat metrics, we prove that they do not exist (Theorem \ref{thm:negativeprojectivelyflat}, compare \cite[Theorem 4.4]{matsuo96}).
 
 About the class of zero projectively flat metrics, we give a characterization (Theorem \ref{thm:zeroprojectivelyflat}) of them
 which involves balanced metrics and globally conformally Chern flat metrics, essentially by means of the existing literature 
 (\cite{liyauzhengcag94,matsuo96}). Then we recall that the understanding of Chern flat metrics 
 is fairly satisfactory thanks to a result by Boothby \cite{boothby}.
  The class of zero projectively flat metrics can be on its turn subdivided into K\"ahlerian and non-K\"ahlerian manifolds.
 About the first ones, we give a characterization criterion (Theorem \ref{thm:zeroprojectivelyflat2}) 
 building on the Calabi Yau theorem and the Kobayashi L\"ubke inequality; we recall that they are classified up to dimension 
 three (see \cite{dekimpe}).

  As applications of this partition of projectively flat metrics we prove (Corollary \ref{cor:refinement}) 
  a refinement of \cite[Theorem 1]{liyauzhengcag94} by Li, Yau, and Zheng of possible
  occurrences of projectively flat manifolds. In fact, among all the finite undercovers of Hopf manifolds, 
  we are able to say that only the locally conformally flat-K\"ahler Hopf manifolds classified 
  by Vaisman admit projectively flat metric, which is the Boothby metric.
  
  A second application (Corollary \ref{cor:asthenoarekahler}) is that if a metric is 
  astheno-K\"ahler and projectively flat, 
  then it is K\"ahler and zero projectively flat.
  
  \subsection*{Acknowledgements}
  The author is supported by SIR 2014 AnHyC ``Analytic aspects in complex and hypercomplex geometry" (code RBSI14DYEB) and by GNSAGA of INdAM; he also wants to thank Xiuxiong Chen for constant support. 
  Thanks to Song Sun for his support and for recommending the reading of \cite{lubketeleman}, and to Alexandra Otiman and David Petrecca for pointing out reference \cite{dragomirornea}.
  This research received benefit by the great environment at Stony Brook and the Simons Center for Geometry and Physics. During this  research the author's visiting assistant professor at the Institute of Mathematical Sciences of the ShanghaiTech University, and funded by the Municipal Science and Technology Commission.
  
\section{Setup and definitions}
 Let $(M, J)$ be a differentiable manifold of complex dimension $n$ endowed with an integrable complex structure; 
 also, $T_M$ will denote the holomorphic tangent bundle of $M$. 
 The integrability of $J$ is required since it is usually assumed in the theory of Hermitian Yang Mills metrics (see \cite{lubketeleman}).  
 Given a Hermitian metric $h$ on $M$, on a coordinate chart of $M$ the curvature tensor induced by the Chern connection is given by
 \begin{align}\label{equa:curvaturetensor}
  \Theta_{i\bar{j}k\bar{l}} := - \frac{\partial^2 h_{i\bar{j}}}{\partial z^k \partial \bar{z}^l} 
  + h^{p\bar{q}} \frac{\partial h_{i\bar{q}}}{\partial z^k} \frac{\partial h_{p\bar{j}}}{\partial \bar{z}^l} ,
 \end{align}
  which leads to three types of Ricci tensors that we will refer to as first (respectively second, third) Ricci, 
  as follows
  \begin{align} \label{equa:riccitensors}
   \Theta^{(1)}_{k\bar{l}} := h^{i\bar{j}} \Theta_{i\bar{j}k\bar{l}} ; \; \; \; \; \; \; \; \; \; \; 
   \Theta^{(2)}_{i\bar{j}} := h^{k\bar{l}} \Theta_{i\bar{j}k\bar{l}} ; \; \; \; \; \; \; \; \; \; \; 
   \Theta^{(3)}_{i\bar{l}} := h^{k\bar{j}} \Theta_{i\bar{j}k\bar{l}} \; .
  \end{align}
 Hoping that the following notation is suggestive rather than confusing, we will also denote the curvature tensor of $h$ as 
$\Theta : = :\Theta_{[h]}$, the first Ricci tensor as $\Theta^{(1)}_{[h]} :=: tr_h \Theta_{[h]}$, 
and the second Ricci tensor as $\Theta^{(2)}_{[h]} :=: \Lambda_h \Theta_{[h]}$

Here we introduce the main focus of the present manuscript
 
 \begin{defi}\label{defi:projectively flat metric}
  A projectively flat metric $h$ is a Hermitian metric on $M$ such that it satisfies
 \begin{align}\label{equa:projflatmetric}
  \Theta_{i\bar{j}k\bar{l}} = \frac{1}{n} \Theta^{(1)}_{k\bar{l}} h_{i\bar{j}} \; .
 \end{align}
 Also, \eqref{equa:projflatmetric} can be expressed as $\Theta_{[h]} = \frac{1}{n} tr_h \Theta_{[h]} \cdot h$.
 \end{defi}
 
 \begin{rem}
   In \cite[(2.2.3), page 51]{lubketeleman} are labeled as projectively flat metrics those $h$ of a Hermitian holomorphic vector bundle
   $(E,h)\rightarrow (M, g)$ for which there holds an equation that, 
   in the case $E=T_M$ and $h=g$ is precisely our \eqref{equa:projflatmetric}. The same metrics as in Definition \ref{defi:projectively flat metric} 
   were labeled as projectively flat already in \cite{liyauzhengcag94}.
 \end{rem}
 
 \begin{rem}
  It is straightforward to check that if $h$ is projectively flat, then given any 
  $u \in C^{\infty}(M; \mathbb{{R}})$, also $\exp(u)h$ is 
  projectively flat.
 \end{rem}

 \section{A partition for the class of projectively flat metrics}
 
 Our next goal is to define a partition on the class of projectively flat metrics. 
 We first need as lemma the Gauduchon conformal method 
 (which we state without proof, that can be found in \cite{angellacalamaispotti,balas,gauduchon,yang}), 
 for which it is useful to recall the following notion of Chern scalar curvatures
 
 \begin{defi}
  A further contraction of the Ricci tensors \eqref{equa:riccitensors} leads to two distinct types of Chern scalar curvatures, as follows
  \begin{align} \label{equa:scalarcurvatures}
   s:= s_{[h]} :=: \Lambda_h tr_h \Theta_{[h]} :=: h^{k\bar{l}}\Theta^{(1)}_{k\bar{l}} :=: h^{i\bar{j}} \Theta^{(2)}_{i\bar{j}} ; \; \; \; \; \; \; \; \; \; \; 
   \hat s : = {\hat s}_{[h]} :=: h^{i \bar{l}} \Theta^{(3)}_{i\bar{l}} \; .
  \end{align}
 \end{defi}

 Moreover, we recall the well known concept of the sign of the Gauduchon degree.
 
 \begin{defi}
  For any fixed conformal class $\{h\}$ of Hermitian metrics on a given compact complex manifold $M$
  of complex dimension $n\geq 2$.
  Recall from \cite{gauduchon} that there is one, up to homothety, Gauduchon metric $g\in \{ h\}$.
  Then the sign of the Gauduchon 
  degree is given by
  \begin{align*}
   \Gamma_{h}(M):= sign\left(\int_M s_g dV_g \right)\; .
  \end{align*}
 \end{defi}

 \begin{lemma} \label{lem: gauduchon conformal trick}
  Given a compact complex manifold $(M,J)$ and a conformal class $\{h\}$ of Hermitian metrics on $M$, there exists in $\{h\}$ a 
  representative $\tilde h$ such that its Chern scalar curvature $s_{\tilde h}$ has constant sign, which is the same as the sign of 
  the Gauduchon degree of $\{h\}$.
 \end{lemma}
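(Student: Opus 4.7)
The plan is to start from the Gauduchon representative of $\{h\}$ and modify it by a further conformal factor $e^{2f}$, choosing $f$ as the solution of a single linear elliptic equation; the Gauduchon gauge is precisely the one that makes this equation solvable by Fredholm. First, I would fix the representative $g\in\{h\}$ given by Gauduchon's theorem, unique up to homothety, whose fundamental form $\omega_g$ satisfies $\partial\bar\partial\,\omega_g^{n-1}=0$, normalized so that $\int_M\omega_g^n/n!=1$. Setting $c:=\int_M s_g\,dV_g$, the sign of $c$ is by definition the Gauduchon degree $\Gamma_h(M)$.

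Next I would derive the conformal transformation of $s$. Writing a representative of $\{h\}$ as $\tilde h=e^{2f}g$, the identity $\log\det\tilde h=\log\det g+2nf$ together with $\Theta^{(1)}_{k\bar l}=-\partial_k\partial_{\bar l}\log\det h$ and $\tilde h^{k\bar l}=e^{-2f}g^{k\bar l}$ yields
\[
s_{\tilde h}=e^{-2f}\bigl(s_g-2n\,\Lambda_g\,i\partial\bar\partial f\bigr).
\]
I would then analyze the operator $L:=-2n\,\Lambda_g\,i\partial\bar\partial$ on $C^\infty(M)$: it is linear, second-order, elliptic, with the same principal symbol as the Laplace--Beltrami operator of $g$, so $\ker L=\R$ by the strong maximum principle. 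A Stokes-type integration by parts moving $\partial\bar\partial$ from $f$ onto $\omega_g^{n-1}$, combined with $\partial\bar\partial\,\omega_g^{n-1}=0$, gives $\int_M Lf\,\omega_g^n/n!=0$ for every $f\in C^\infty(M)$, so $1\in\ker L^*$; since $L$ has vanishing index as a real second-order elliptic operator on a closed manifold, $\dim\ker L^*=\dim\ker L=1$ and hence $\ker L^*=\R$.

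Finally, the Fredholm alternative provides a smooth solution $f$ of $Lf=c-s_g$ (the right-hand side being orthogonal to $\ker L^*=\R$), and for this $f$ the metric $\tilde h:=e^{2f}g$ satisfies $s_{\tilde h}=e^{-2f}\cdot c$, whose pointwise sign is identically $\mathrm{sign}(c)=\Gamma_h(M)$. The main step is the integration by parts pinning down $\ker L^*$; this is precisely what the Gauduchon gauge supplies, and what allows a purely linear Fredholm argument to replace a Yamabe-type nonlinear deformation.
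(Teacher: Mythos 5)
The paper states this lemma explicitly without proof, deferring to \cite{angellacalamaispotti,balas,gauduchon,yang}; your argument is correct and is essentially the standard proof found in those references: the conformal transformation law $s_{e^{2f}g}=e^{-2f}(s_g+Lf)$ for the Chern Laplacian $L$, followed by the linear Fredholm alternative, with the Gauduchon gauge $\partial\bar\partial\,\omega_g^{n-1}=0$ being exactly what forces $\ker L^{*}=\R$ and hence the solvability of $Lf=c-s_g$. All the key steps (the transformation formula, $\ker L=\R$ via the strong maximum principle, vanishing index, and the integration by parts) check out.
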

 
 Next we introduce a fundamental definition for our purposes.
 
 \begin{defi} \label{defi:negzeroposprojflat}
  Let $(M, J)$ be a complex manifold, and assume the existence of the conformal class $\{h\}$ of projectively flat metrics on $M$. 
  Then, any representative of that class is called, respectively
  \begin{itemize}
   \item Negative projectively flat if and only if in $\{h\}$ there is a representative $\tilde h$ such that $s_{\tilde h}$ is a negative function.
   \item Zero projectively flat if and only if in $\{h\}$ there is a representative $\tilde h$ such that $s_{\tilde h}=0$.
   \item Positive projectively flat if and only if in $\{h\}$ there is a representative $\tilde h$ such that $s_{\tilde h}$ is a positive function.
  \end{itemize}
 \end{defi}

\begin{rem}
 Thanks to the fact that the sign of the Gauduchon degree is a conformal invariant, exactly one of the three possibilities in 
 Definition \ref{defi:negzeroposprojflat} occurs.
\end{rem}

\begin{rem} \label{rem:complexdimensionone}
 In complex dimension one, it is straightforward to check that any Hermitian metric on a Riemann surface is projectively flat.
 The partition described in Definition \ref{defi:negzeroposprojflat} amounts to the partition of Riemann surfaces according 
 to genus bigger or equal to two, genus one, and genus zero respectively.
\end{rem}

\section{Negative projectively flat metrics}
 The main result of this section is the non existence of negative projectively flat metrics; we remark that this result was essentially proved in
 \cite[Theorem 4.4]{matsuo96}, but it is worth noticing that according to the partition introduced in Definition \ref{defi:negzeroposprojflat}, 
 from this we will be able to conclude new applications. Moreover in \cite{matsuo96} Matsuo introduces the Chern counterpart of the Weyl
 and he proves a characterization of projectively flat metrics that involves the vanishing of that tensor; 
 on the other hand, the proof of non existence of negative projectively flat metrics doesn't need such characterization.
 
\begin{thm}\label{thm:negativeprojectivelyflat}
 Let $(M,J)$ be a compact complex manifold of complex dimension $n\geq 2$. Then it cannot admit a negative projectively flat metric.
\end{thm}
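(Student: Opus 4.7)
My plan is to argue by contradiction. Assume $(M,J)$ carries a negative projectively flat metric. By Definition \ref{defi:negzeroposprojflat}, the conformal class contains a representative with pointwise negative Chern scalar curvature; since projective flatness is conformally invariant (as already remarked after Definition \ref{defi:projectively flat metric}), I may replace $h$ by that representative and assume from the start that $h$ is projectively flat with $s_h<0$ pointwise on $M$.

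The first step is a purely algebraic contraction of the projective flatness equation \eqref{equa:projflatmetric}. Multiplying by $h^{k\bar j}$ and using $h^{k\bar j}h_{i\bar j}=\delta^k_i$, I obtain
\[
\Theta^{(3)}_{i\bar l} \;=\; \frac{1}{n}\,\Theta^{(1)}_{i\bar l},
\]
and a further trace by $h^{i\bar l}$ gives the scalar identity $\hat s_h = s_h/n$, or equivalently
\[
s_h - \hat s_h \;=\; \frac{n-1}{n}\,s_h.
\]

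The second step, which I expect to be the main obstacle, is to invoke a classical integral inequality on compact Hermitian manifolds:
\[
\int_M (s - \hat s)\, dV \;\geq\; 0,
\]
with equality iff the Lee form of $\omega$ vanishes (balanced case). This identity is standard in Hermitian geometry (Gauduchon, Balas, Liu--Yang) and comes from integrating by parts the pointwise difference $s-\hat s$ expressed in terms of the Chern torsion. Combining it with the previous step yields
\[
\frac{n-1}{n}\int_M s_h\, dV_h \;\geq\; 0,
\]
so for $n\geq 2$ necessarily $\int_M s_h\, dV_h\geq 0$. This contradicts $s_h<0$ pointwise, completing the proof.

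The tensorial manipulation in the first step is routine; the real work sits in having the Hermitian inequality $\int(s-\hat s)\,dV\geq 0$ on hand. If one preferred to avoid it, an alternative route would go through Lemma \ref{lemma:pfimpliesbalancedorlcK}: in the balanced case one deduces $\int s = \int \hat s$, which combined with $s=n\hat s$ forces $\int s=0$ (contradicting negativity of the Gauduchon degree), while the locally conformally K\"ahler case falls under Vaisman's classification and yields $s>0$, again contradicting the negative assumption.
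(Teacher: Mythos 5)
Your argument is correct and is essentially the paper's own proof: the contraction giving $n\,\Theta^{(3)}=\Theta^{(1)}$ (hence $\hat s = s/n$) is exactly equation \eqref{equa:firstandthirdricci}, and your ``classical inequality'' $\int_M (s-\hat s)\,dV \geq 0$ is precisely what the paper obtains by tracing the identity $\rho^{(3)}=\rho^{(1)}-\partial\partial^*\omega$ and integrating by parts to get $\frac{n-1}{n}\int_M s_h\,dV_h=\int_M|\partial^*\omega|^2\,dV_h\geq 0$. The only cosmetic difference is that you quote the integrated inequality as a black box while the paper derives it from \eqref{equa:thirdandfirstricci}; the contradiction with the negative Gauduchon degree is identical.
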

\begin{proof}
 Assume by contradiction that $h$ is a negative projectively flat metric on $M$. Applying $h^{k\bar j}$ 
 to \eqref{equa:projflatmetric} and summing over $k,j$ we end up with
 \begin{align}\label{equa:firstandthirdricci}
  n \cdot \Theta_{[h]}^{(3)} = \Theta_{[h]}^{(1)} \; .
 \end{align}
 As usual we can associate to $\Theta^{(a)}, a=1,2,3$, and $h$ their corresponding $(1,1)$-forms $\rho^{(a)}, a=1,2,3$, $\omega$.
 Hence, an equivalent statement of \eqref{equa:firstandthirdricci} is $n\cdot \rho^{(3)}=\rho^{(1)}$.
 The first and third Ricci forms satisfy the following general relation (see \cite{gauduchon,liuyang})
 \begin{align} \label{equa:thirdandfirstricci}
  \rho^{(3)} = \rho^{(1)} - \partial \partial^* \omega \; .
 \end{align}
We infer that, for any projectively flat metric there holds 
 \begin{align}\label{equa:ricciforanypj}
  \frac{n-1}{n}\rho^{(1)} = \partial \partial^* \omega \; .
 \end{align}
 Now, tracing by means of $\omega$ and integrating by parts we get
 \begin{align*}
  \frac{n-1}{n}\int_M s_{[h]} dV_h = \int_M  (\partial^* \omega, \, \partial^* \omega)_\omega dV_h \geq 0 \, ,
 \end{align*}
 which contradicts the assumption on the negativity of the Gauduchon degree of $h$. This completes the proof of the theorem.
\end{proof}

\begin{rem}
 As observed in Remark \ref{rem:complexdimensionone} there are examples of negative projectively flat metrics, which now turn out to
 be the only ones.
\end{rem}

\section{Zero projectively flat metrics}
 We begin this section with recalling a result in \cite{matsuo96}, which was also hinted in \cite{lubketeleman}; it says that
 projectively flat metrics are the same as locally conformally Chern flat metrics, in the sense now specify.

 \begin{defi}\label{defi:lcf}
  Let $M$ be a complex manifold, and $h$ be a Hermitian metric on $M$. Then $h$ is called
  locally conformally Chern flat if and only if for any point $p\in M$ there exists an open neighborhood $p\in U \subset M$
  and a function $u \in C^{\infty} (U ; \mathbb{R})$ such that 
  \begin{align} 
  \label{equa:lcf}
   \Theta_{[\exp(u) \cdot h]} = 0 \; .
  \end{align}
 \end{defi}
 
  \begin{lemma}
   \label{lemma:pjequallcf}
   Let $(M,\, h)$ be a compact Hermitian manifold. Then $h$ is projectively flat metric if and only if it is locally conformally Chern flat.
  \end{lemma}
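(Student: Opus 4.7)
The plan is to reduce the lemma to two ingredients: a conformal transformation formula for the Chern curvature tensor, and the standard local expression $\Theta^{(1)}_{k\bar{l}} = -\del_k\del_{\bar{l}}\log\det(h_{a\bar{b}})$ for the first Chern--Ricci tensor in any coordinate chart. Concretely, starting from the Chern connection form $\theta^a_{\ b} = h^{a\bar{c}}\del h_{b\bar{c}}$, a direct computation for the rescaling $\tilde h = \exp(u)\cdot h$ shows $\tilde\theta^a_{\ b} = \theta^a_{\ b} + \delta^a_b\,\del u$, so that the $\mathrm{End}(T_M)$-valued Chern curvature transforms as $\tilde R^a_{\ b} = R^a_{\ b} - \delta^a_b\,\del\delbar u$. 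Lowering the upper index via $\tilde h_{a\bar{j}} = \exp(u)\,h_{a\bar{j}}$ then yields the key identity
\[
\Theta_{[\exp(u)h]\,i\bar{j}k\bar{l}} \;=\; \exp(u)\,\bigl(\Theta_{[h]\,i\bar{j}k\bar{l}} - u_{k\bar{l}}\,h_{i\bar{j}}\bigr),
\]
valid on any coordinate chart, with $u_{k\bar{l}} := \del_k\del_{\bar{l}}u$.

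Given this formula, the direction \emph{locally conformally Chern flat $\Rightarrow$ projectively flat} is immediate. On any neighborhood $U\ni p$ on which $\Theta_{[\exp(u)h]} = 0$, the displayed identity forces $\Theta_{[h]\,i\bar{j}k\bar{l}} = u_{k\bar{l}}h_{i\bar{j}}$; taking the $h$-trace over $(i,\bar{j})$ gives $u_{k\bar{l}} = \tfrac{1}{n}\Theta^{(1)}_{k\bar{l}}$, and substituting back recovers exactly \eqref{equa:projflatmetric} on $U$. Covering $M$ by such neighborhoods establishes projective flatness globally.

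For the converse, suppose \eqref{equa:projflatmetric} holds on $M$. Fix $p\in M$ and a coordinate chart $U\ni p$, and set $u := -\tfrac{1}{n}\log\det(h_{a\bar{b}})$ on $U$; this is a real smooth function on $U$ which, by the standard local expression for the first Chern--Ricci tensor, satisfies $u_{k\bar{l}} = \tfrac{1}{n}\Theta^{(1)}_{k\bar{l}}$. Combining this with \eqref{equa:projflatmetric} and the conformal transformation formula above yields $\Theta_{[\exp(u)h]} = 0$ on $U$, so that $h$ is locally conformally Chern flat in the sense of Definition \ref{defi:lcf}. The only delicate step is the verification of the conformal transformation formula for $\Theta$, and in particular careful handling of the $\exp(u)$ prefactor produced when lowering an index; thereafter both directions reduce to straightforward algebraic manipulations and, in the converse, to the explicit choice $u = -\tfrac{1}{n}\log\det h$ of the conformal factor.
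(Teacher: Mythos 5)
Your proof is correct and follows essentially the same route as the paper: both directions hinge on the conformal transformation identity $\Theta_{[\exp(u)h]}=\exp(u)\bigl(\Theta_{[h]\,i\bar j k\bar l}-u_{k\bar l}h_{i\bar j}\bigr)$, with the converse obtained by contracting with $h^{i\bar j}$ and the forward direction by producing a local potential $u$ with $u_{k\bar l}=\tfrac{1}{n}\Theta^{(1)}_{k\bar l}$. The only (cosmetic) difference is that you exhibit the potential explicitly as $u=-\tfrac{1}{n}\log\det(h_{a\bar b})$, where the paper invokes the local Dolbeault/$\partial\bar\partial$-lemma for the closed form $\rho^{(1)}$; your version also makes the factor $\tfrac{1}{n}$ explicit, which the paper's wording elides.
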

  
  \begin{proof}
   We first assume that $h$ is projectively flat metric.
   Recall that the first Ricci form $\rho^{(1)}$ is a $d$-closed real $(1,1)$-form; then by the Dolbeault lemma, we can find an open
   neighborhood of any fixed $p\in M$ such that $\Theta^{(1)}_{k\bar l} = u_{k \bar l}$ for some $u \in C^{\infty}(U ; \mathbb{R})$.
   Then, over $U$ there holds, from the very definition of the curvature tensor $\Theta$,
\begin{align*}
 \Theta_{[\exp(u)\cdot h]} = \exp(u)\cdot  \left( \Theta_{i \bar j  k \bar l} - u_{k \bar l} \, \cdot \, h_{i \bar j} \right) \, ,
\end{align*}
 which entails, using \eqref{equa:projflatmetric} and that $\Theta^{(1)}_{k\bar l} = u_{k \bar l}$, 
  $\Theta_{[\exp(u)\cdot h]}=0$, that is, $h$ is locally conformally Chern flat.
  
  Now we assume that $h$ is locally conformally Chern flat; we are going to prove that on any point $p\in M$ there
  holds \eqref{equa:projflatmetric}.
  By hypothesis we have that around $p$, for some function $u$, there holds \eqref{equa:lcf}; expanding it we have
  \begin{align*}
   0=\exp(u) \cdot \left( \Theta_{i \bar j k \bar l}  - u_{k \bar l} \, \cdot \, h_{i \bar j} \right) \, , 
  \end{align*}
 which entails $ \Theta_{i \bar j k \bar l}  = u_{k \bar l} \, \cdot \, h_{i \bar j}$.
 Now contracting via $h$ along $i , \, j$ this is saying that $\Theta^{(1)}_{k \bar l} = n \, \cdot \, u_{k \bar l}$, and whence
 we can conclude that for $h$ there holds \eqref{equa:projflatmetric}.
 This completes the proof of the lemma.
   \end{proof}
   
 Our next goal is to characterize zero projectively flat metrics as global conformally Chern flat metrics, and also as those projectively flat metric
 which are balanced. 
 
 \begin{defi}
  Let $(M, \, h)$ be a  Hermitian manifold. Then $h$ is called globally conformally Chern flat if and only if it satisfies,
   for some function $v \in C^{\infty}(M ; \mathbb{R})$,
   \begin{align*}
    \Theta_{[\exp(v) \cdot h]} = 0 \; .
   \end{align*}
  Let $\omega$ be the $(1,1)$-form corresponding to $h$; then $h$ is called balanced if and only if $d(\omega^{n-1}) = 0$.
 \end{defi}
 
  \begin{prop} \label{thm:zeroprojectivelyflat}
   Let $(M, \, h)$ be a compact Hermitian manifold of complex dimension bigger or equal to two. The following facts are equivalent:
   \begin{enumerate}
    \item $h$ is zero projectively flat metric;
    \item $h$ is both projectively flat and balanced;
    \item $h$ is globally conformally Chern flat.
   \end{enumerate}
  \end{prop}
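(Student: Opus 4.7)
My plan is to establish the cyclic chain of implications $(1) \Rightarrow (2) \Rightarrow (3) \Rightarrow (1)$, making repeated use of the identity
\[
\tfrac{n-1}{n}\, \rho^{(1)} \;=\; \partial\, \partial^*\, \omega
\]
valid for every projectively flat metric, which was already obtained in the proof of Theorem~\ref{thm:negativeprojectivelyflat}. The essential structural fact is that projective flatness is preserved under conformal changes, whereas the additional data (vanishing Chern scalar curvature, balancedness, Chern flatness) each single out a specific representative inside the conformal class $\{h\}$.

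For $(1) \Rightarrow (2)$ I would invoke Lemma~\ref{lem: gauduchon conformal trick} (or Definition~\ref{defi:negzeroposprojflat} directly) to produce $\tilde h \in \{h\}$ with $s_{\tilde h} \equiv 0$; this $\tilde h$ remains projectively flat, so the identity above applies to it, and tracing with $\omega_{\tilde h}$ and integrating by parts exactly as in the proof of Theorem~\ref{thm:negativeprojectivelyflat} gives
\[
0 \;=\; \tfrac{n-1}{n} \int_M s_{\tilde h}\, dV_{\tilde h} \;=\; \int_M (\partial^* \omega_{\tilde h},\, \partial^* \omega_{\tilde h})_{\omega_{\tilde h}}\, dV_{\tilde h} ,
\]
which forces $\partial^* \omega_{\tilde h} = 0$, i.e., $\tilde h$ is also balanced. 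For $(2) \Rightarrow (3)$, if $h$ is projectively flat and balanced then $\partial^* \omega = 0$, the identity above yields $\rho^{(1)} = 0$, and plugging this back into the projective flatness equation~\eqref{equa:projflatmetric} forces $\Theta_{[h]} \equiv 0$; hence $h$ itself is Chern flat, which is the strongest form of being globally conformally Chern flat (take $v \equiv 0$). For $(3) \Rightarrow (1)$, if $\tilde h = \exp(v) h$ is Chern flat then $s_{\tilde h} \equiv 0$ trivially, so $\{h\}$ admits a representative with vanishing Chern scalar curvature and Definition~\ref{defi:negzeroposprojflat} lets us conclude that $h$ is zero projectively flat.

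The only delicate point is step $(1) \Rightarrow (2)$, where one must first pass to $\tilde h$ before reading off the balanced condition, since balancedness is not a conformally invariant property. Condition $(2)$ is therefore naturally interpreted as the existence in $\{h\}$ of a projectively flat balanced representative, and this is precisely what the Gauduchon conformal method combined with \eqref{equa:ricciforanypj} provides. Beyond this, no computation outside those already available in the paper is required.
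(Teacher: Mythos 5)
Your proof is correct and follows essentially the same route as the paper: the identity $\frac{n-1}{n}\rho^{(1)} = \partial\partial^*\omega$ from the proof of Theorem \ref{thm:negativeprojectivelyflat} drives both $(1)\Rightarrow(2)$ (trace and integrate by parts) and $(2)\Rightarrow(3)$ (balancedness forces $\rho^{(1)}=0$, hence Chern flatness), with $(3)\Rightarrow(1)$ immediate. If anything, you are slightly more careful than the paper in step $(1)\Rightarrow(2)$: the paper writes $\int_M s_h\,dV_h=0$ directly for the given $h$, whereas the correct reading --- which you make explicit --- is that one must first pass to the conformal representative $\tilde h$ with $s_{\tilde h}\equiv 0$, since balancedness is not a conformally invariant condition.
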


  \begin{proof}
We claim that (i) implies (ii). In fact, tracing the equation \eqref{equa:ricciforanypj} which holds for any projectively
flat metric, after integrating by parts we end up with
   \begin{align*}
    0= \frac{n-1}{n} \int_M s_{h} dV_h = \int_M  (\partial^* \omega, \, \partial^* \omega)_\omega dV_h \, ,
   \end{align*}
which implies that $\partial (\omega^{n-1}) = 0$, and whence $h$ is balanced.
   
   We claim that (ii) implies (iii). Since $h$ is balanced, from the general relation \eqref{equa:thirdandfirstricci} we conclude
   \begin{align*}
    \Theta^{(3)} = \Theta^{(1)} \; .
   \end{align*}
   On the other hand, since $h$ is projectively flat we infer again \eqref{equa:firstandthirdricci}. Whence, as $n\geq 2$,
   we conclude $\Theta^{(1)}= \Theta^{(3)}=0$, which plugged in \eqref{equa:projflatmetric} says that $h$ is Chern flat.

   The implication from (iii) to (i) being obvious, this completes the proof of the theorem.
  \end{proof}

\begin{rem}
 We emphasize that the equivalence between (ii) and (iii) in Theorem \ref{thm:zeroprojectivelyflat} is very well known and already
 present in literature.
\end{rem}

We now recall a classical result by Boothby \cite{boothby}, which makes the understanding of compact Chern flat manifolds fairly satisfactory.
 
 \begin{prop}
  \label{prop:boothby}
  Let $(M,\, h)$ be a compact Hermitian manifold. 
  Then $(M,\, h)$ is Chern flat if and only if $M$ is a compact quotient of a complex Lie group and $h$ is a left invariant metric.
 \end{prop}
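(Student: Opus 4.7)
The plan is to handle the two directions separately.

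\emph{Easy direction.} Assume $M = \Gamma \backslash G$ with $G$ a complex Lie group, $\Gamma \subset G$ a discrete cocompact subgroup, and $h$ descended from a left invariant Hermitian metric on $G$. I would pick a basis $e_1,\ldots,e_n$ of left invariant holomorphic vector fields on $G$; in the dual holomorphic frame the components $h_{i\bar j} = h(e_i,e_j)$ are constant, so the Chern connection matrix $h^{-1}\del h$ vanishes identically and hence so does its curvature. Since left translations preserve both the complex and the Hermitian structure, the Chern flat structure descends from $G$ to $M$.

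\emph{Hard direction.} Assume $(M,h)$ is compact Chern flat, $\Theta_{[h]} \equiv 0$. I would pass to the universal cover $\pi \colon \tilde M \to M$, where the Chern flat condition persists and, by simple connectedness, the Chern holonomy is trivial. Parallel transport of a fixed frame at a base point then produces a global holomorphic frame $e_1,\ldots,e_n$ on $\tilde M$ with $\nabla e_i = 0$. Since $\nabla$ is a metric connection, the scalars $h(e_i,e_j)$ are parallel and hence constant. Moreover, because the Chern torsion is of type $(2,0)$, for parallel $(1,0)$ fields the bracket $[e_i,e_j] = -T(e_i,e_j)$ is itself of type $(1,0)$, so $[e_i,e_j] = c_{ij}^k e_k$ for smooth functions $c_{ij}^k$ on $\tilde M$.

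The crux -- and the step I expect to be the main obstacle -- is to promote these structure functions to genuine constants. In the parallel coframe $\omega^i$ dual to the $e_i$ the connection forms vanish, the first structure equation reduces to $d\omega^i = T^i$ with $T^i = \tfrac12 c^i_{jk}\, \omega^j \wedge \omega^k$ of type $(2,0)$, and applying $d$ once more yields $dT^i = 0$. The $(2,1)$-part of this identity gives $\delbar c^i_{jk} = 0$, so the structure functions are holomorphic on $\tilde M$. To upgrade holomorphy to constancy, I would combine the $(3,0)$-part of $dT^i = 0$ with the Jacobi identity for $[e_i,e_j]$, and use that the deck group $\Gamma$ of $\pi$ preserves the Chern connection and therefore acts on the parallel frame via a constant linear representation; the resulting transformation rule for the $c^i_{jk}$, together with compactness of $M$, forces these holomorphic functions to be constant.

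Once constancy is established, the $e_1, \ldots, e_n$ span a finite dimensional complex Lie algebra $\mathfrak{g}$ of parallel holomorphic vector fields. By the complex analogue of Lie's third theorem there exists a simply connected complex Lie group $G$ with Lie algebra $\mathfrak{g}$, and matching the $e_i$ against a basis of left invariant vector fields on $G$ produces, via integration of the associated flows, a biholomorphism $\tilde M \cong G$ under which the left invariant metric on $G$ corresponds to $\pi^* h$. Since $\Gamma \cong \pi_1(M)$ preserves both the Chern connection and the parallel frame, it must act on $G$ by left translations; the quotient therefore presents $M$ as $\Gamma \backslash G$ endowed with the descended left invariant metric, in accordance with Boothby's original argument \cite{boothby}.
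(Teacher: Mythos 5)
You should first know that the paper offers no proof of this proposition at all: it is recalled as a classical result with a bare citation to Boothby, so there is nothing internal to compare against. Your sketch follows Boothby's original strategy (trivial Chern holonomy on the universal cover, a parallel unitary frame of holomorphic vector fields, torsion of type $(2,0)$ forcing $[e_i,e_j]=c^k_{ij}e_k$ with holomorphic $c^k_{ij}$), and the easy direction is complete and correct.

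The gap sits exactly where you predicted it: promoting the holomorphic structure functions to constants. The $(3,0)$-part of $dT^i=0$ only reproduces the Jacobi identity and yields no constancy, and ``the transformation rule together with compactness'' is not yet an argument. The step that actually closes this is: any two parallel unitary frames differ by a constant matrix in $U(n)$, so the function $\sum_{i,j,k}|c^i_{jk}|^2$ is frame-independent and descends to a well-defined function on the \emph{compact} manifold $M$; locally it is a finite sum of squared moduli of holomorphic functions, hence plurisubharmonic, hence constant by the maximum principle; and constancy of $\sum_\alpha |f_\alpha|^2$ with each $f_\alpha$ holomorphic forces every $f_\alpha$ to be constant (compute $\partial\bar\partial$ of the sum). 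A second, smaller gap is your final claim that the deck group ``preserves the parallel frame'' and therefore acts by left translations: a deck transformation preserves the Chern connection and the metric, so it carries your parallel frame to \emph{some} parallel unitary frame, i.e.\ to $eA_\gamma$ with $A_\gamma\in U(n)$ constant; only when $A_\gamma=\mathrm{id}$ does it commute with the right-translation flows of the $e_i$ and hence act as a left translation. This residual $U(n)$-ambiguity must be dealt with, as in Boothby's paper, before one can place $\pi_1(M)$ inside $G$ rather than inside $G$ extended by unitary automorphisms.
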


 \begin{rem}
  An important subclass of compact Chern flat manifolds are the complex parallelizable manifolds, which were classified by Wang \cite{hcwang}.
 \end{rem}
  
On their turn, zero projectively flat manifolds can be subdivided between K\"ahlerian and not K\"ahlerian. Our next goal is to 
give an easy characterization of the K\"ahlerian zero projectively flat manifolds, building on the Calabi Yau theorem and on the
Kobayashi L\"ubke inequality.
 
As preparation, we need to state a result in \cite{matsuo96} whose proof can be found in there. 
 
\begin{lemma}\label{lem:preparation}
 Let $(M, \, h)$ be a projectively flat manifold of complex dimension bigger
 or equal to two. Then for all $k=1,\ldots , n-1$ its $k$-th Chern classes are zero: $C_k(M) =0$.
\end{lemma}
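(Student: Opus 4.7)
The plan is direct Chern--Weil. Projective flatness (Definition~\ref{defi:projectively flat metric}) translates, at the level of the $\End(T_M)$-valued curvature $2$-form $F_h$, into
$$F_h \;=\; \alpha\cdot \mathrm{Id}_{T_M}, \qquad \alpha \;:=\; \tfrac{1}{n}\Theta^{(1)}_{[h]},$$
with $\alpha$ a closed real $(1,1)$-form on $M$. This is just a rewriting of $\Theta_{i\bar j k\bar l} = \tfrac{1}{n}\Theta^{(1)}_{k\bar l}\,h_{i\bar j}$ in bundle-endomorphism language, but it is the only structural input needed.

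Because $F_h$ is a scalar multiple of $\mathrm{Id}_{T_M}$, the Chern--Weil representative of the total Chern class simplifies dramatically:
$$c(T_M,h) \;=\; \det\!\left(\mathrm{Id}_{T_M} + \tfrac{F_h}{2\pi\mathrm{i}}\right) \;=\; \left(1 + \tfrac{\alpha}{2\pi\mathrm{i}}\right)^{\!n},$$
so that degree by degree $c_k(T_M,h) = \binom{n}{k}\bigl(\alpha/2\pi\mathrm{i}\bigr)^{k}$ for $0\le k\le n$. Passing to de Rham cohomology yields the characteristic projectively-flat identity
$$c_k(M) \;=\; \binom{n}{k}\bigl(c_1(M)/n\bigr)^{k}, \qquad 1\le k\le n,$$
which encodes the classical ``projective'' constraint on the higher Chern classes.

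To upgrade this identity to the actual vanishing $C_k(M)=0$ for $1\le k\le n-1$, I would split according to the partition of Definition~\ref{defi:negzeroposprojflat}. Negative projectively flat metrics do not exist by Theorem~\ref{thm:negativeprojectivelyflat}. In the zero case, Proposition~\ref{thm:zeroprojectivelyflat} produces a Chern-flat representative $\tilde h\in\{h\}$; for that representative $\alpha\equiv 0$ pointwise, every Chern form vanishes identically, and hence $c_k(M)=0$ in cohomology for all $k\ge 1$, using that Chern classes are invariants of the underlying complex manifold. In the positive case one appeals to Theorem~\ref{thm:positiveprojectivelyflat}, which identifies $M$ with a Hopf manifold; the conclusion then reduces to the purely topological fact that $H^{2k}(M;\R)=0$ for $1\le k\le n-1$.

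The main obstacle is precisely this last step. The Chern--Weil identity above does \emph{not} by itself produce any cohomological vanishing; it only expresses $c_k(M)$ in terms of $c_1(M)^k$. Deducing $c_1(M)^k=0$ in $H^{2k}(M;\R)$ for $k<n$ therefore appears to require either routing through the sign partition, and thus through the structural theorems of the paper, or a finer analysis passing through Lemma~\ref{lemma:pjequallcf} and the local $\partial\bar\partial$-exactness of $\alpha$ in order to control the Bott--Chern class of $\alpha$ globally -- a more delicate, but more self-contained, alternative.
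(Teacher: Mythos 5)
The paper does not actually prove this lemma: it is stated with an explicit deferral to Matsuo \cite{matsuo96} (``whose proof can be found in there''), so there is no internal argument to compare yours against, and any genuine proof you give is by definition a different route. Judged on its own, your proposal is essentially correct, and the ``main obstacle'' you flag at the end is already resolved by the case analysis you yourself sketch. The Chern--Weil step is right: projective flatness gives $F_h=\tfrac1n\rho^{(1)}\otimes\mathrm{Id}_{T_M}$ with $\rho^{(1)}$ closed, hence $c_k(M)=\binom{n}{k}\bigl(c_1(M)/n\bigr)^k$ in de Rham cohomology, which alone proves nothing. The trichotomy then finishes it: the negative class is empty (Theorem~\ref{thm:negativeprojectivelyflat}); in the zero class Proposition~\ref{thm:zeroprojectivelyflat} provides a Chern-flat conformal representative whose Chern forms vanish identically; in the positive class Theorem~\ref{thm:positiveprojectivelyflat} exhibits $M$ as a finite quotient of $(\C^n\setminus 0)/\langle\gamma_0\rangle\cong S^1\times S^{2n-1}$, so the transfer map gives $H^{2k}(M;\R)\hookrightarrow H^{2k}(S^1\times S^{2n-1};\R)=0$ for $1\le k\le n-1$ --- which also explains why the range stops at $n-1$, since $H^{2n}(M;\R)\neq 0$ there (though $c_n(M)=\chi(M)=0$ holds anyway). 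Your worry about ``routing through the structural theorems'' is not a circularity: Lemma~\ref{lem:preparation} feeds only into the converse direction of Theorem~\ref{thm:zeroprojectivelyflat2}, on which nothing in Section~5 depends, so the argument is logically sound; it merely forces the lemma to be proved \emph{after} Section~5 rather than where it currently sits, whereas the citation to \cite{matsuo96} keeps it independent of the structure theory. Finally, the ``more self-contained'' alternative you gesture at in your last sentence is the one place I would push back: from \eqref{equa:ricciforanypj} one gets that $\rho^{(1)}$, hence $(\rho^{(1)})^k$, is $\partial$-exact, but $\partial$-exact plus $d$-closed does not imply $d$-exact on a general compact complex manifold, so that route does not obviously close and should not be presented as an easy fix.
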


\begin{thm}\label{thm:zeroprojectivelyflat2}
 Let $(M , \, J)$ be a compact K\"ahlerian manifold. Then
 the vanishing of  the first and the second Chern classes of $M$ vanish 
 is equivalent to the existence of a zero projectively flat metric on $M$.
\end{thm}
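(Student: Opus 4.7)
The plan is to prove the two implications separately.

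For the ``only if'' direction, suppose $M$ carries a zero projectively flat metric $h$. By Proposition~\ref{thm:zeroprojectivelyflat}~(iii), some representative $\tilde h = \exp(v)\cdot h$ is globally conformally Chern flat, i.e.\ $\Theta_{[\tilde h]} \equiv 0$. Chern--Weil theory then forces every Chern form of $T_M$ built from $\tilde h$ to vanish pointwise, so $c_k(M) = 0$ for every $k\geq 1$, and in particular $c_1(M) = c_2(M) = 0$. As an alternative one could invoke Boothby's Proposition~\ref{prop:boothby} to realize $M$ as a compact quotient of a complex Lie group, whose holomorphic tangent bundle is holomorphically trivial.

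For the ``if'' direction, assume $M$ is compact K\"ahler with $c_1(M) = c_2(M) = 0$. Since $c_1(M) = 0$, the Calabi--Yau theorem yields a Ricci-flat K\"ahler metric $\omega$ on $M$; for such a K\"ahler metric the Chern connection coincides with the Levi--Civita one, so $\rho^{(1)}_\omega = 0$ and $\omega$ is Hermite--Einstein on $T_M$ with vanishing Einstein factor. The Kobayashi--L\"ubke inequality applied to the rank $n$ Hermite--Einstein bundle $T_M$ states
\begin{align*}
 \int_M \Bigl( 2n\cdot c_2(T_M) - (n-1)\, c_1(T_M)^2 \Bigr) \wedge [\omega]^{n-2} \geq 0,
\end{align*}
with equality if and only if $(T_M,\omega)$ is projectively flat in the sense of Definition~\ref{defi:projectively flat metric}. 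Under our hypotheses the left-hand side vanishes, so equality is forced and $\omega$ is projectively flat; combining with $\rho^{(1)}_\omega = 0$ gives $s_\omega = \Lambda_\omega \rho^{(1)}_\omega = 0$, hence $\omega$ is zero projectively flat.

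The only delicate point is invoking the Kobayashi--L\"ubke inequality in the correct sign convention and identifying its equality case with Definition~\ref{defi:projectively flat metric}; both are standard and recorded in \cite{lubketeleman}. Everything else is an assembly of the Calabi--Yau theorem, Chern--Weil theory, and Proposition~\ref{thm:zeroprojectivelyflat}.
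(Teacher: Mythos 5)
Your proof is correct and the substantive (``if'') direction is essentially identical to the paper's: Calabi--Yau gives a Ricci-flat K\"ahler metric, which is Hermite--Einstein on $T_M$ with vanishing Einstein factor, and the equality case of the Kobayashi--L\"ubke inequality together with the vanishing of $c_1$ and $c_2$ forces projective flatness. The only (harmless) divergence is in the converse, where the paper cites Lemma~\ref{lem:preparation}, whereas you derive the vanishing of the Chern classes directly from Proposition~\ref{thm:zeroprojectivelyflat}~(iii) via Chern--Weil; both routes work, and your explicit remark that $s_\omega=0$, so that the metric produced is genuinely \emph{zero} projectively flat, supplies a step the paper leaves implicit.
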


\begin{proof}
 Assuming the vanishing of the classes, then by the Calabi Yau theorem \cite{yau}, on any K\"ahler class (which exists by assumption) of $M$ there is a K\"ahler Ricci flat metric $h$.
 Since $h$ is K\"ahler, its first and second Ricci curvature coincide, and we can write down the K\"ahler Ricci flat condition as
 $\Theta^{(2)}_{[h]} = \Lambda_h \Theta_{[h]} = 0$. This is entailing that $h$ is $h$-Hermitian Yang Mills with $\gamma=0$.
 Now, the Kobayashi L\"ubke inequality \cite[(2.2.3) page 51]{lubketeleman} tells us that, as $h$ is K\"ahler and $h$-Hermitian Yang Mills metric,
 then, denoting as usual by $\omega$ the K\"ahler form of $h$, we have
 \begin{align*}
  \int_M \left( 2n C_2 (M) - (n-1) C_1^2 (M) \right) \wedge \omega^{n-2} \geq 0
 \end{align*}
 and equality holds if and only if $h$ is projectively flat. Using the hypothesis on the Chern classes and the fact that $\omega$ is K\"ahler,
 the Stokes theorem allows to conclude that in fact $h$ is projectively flat.
 The converse statement follows from Lemma \ref{lem:preparation}.
 This completes the proof of the Theorem.
\end{proof}

\begin{rem}
 
 It would be interesting to have a statement 
 in the vein of Theorem \ref{thm:zeroprojectivelyflat2} in the non-K\"ahlerian case
 (Compare \cite[Proposition, page 67]{BourguignonindeBartolomeisTricerriVesentini}).
\end{rem}

\section{Positive projectively flat metrics}
 We begin with recalling a well known fundamental concept.
 
  \begin{defi}
   \label{defi:lcK}
   Let $(M, \, h)$ be a Hermitian manifold; let $\omega$ be the fundamental $(1,1)$-form corresponding to $h$.
   Then $h$ is called locally conformally K\"ahler if and only if for any point $p\in M$ there exists an open neighborhood $U\subset M$
   of $p$ and a function $u\in C^{\infty}(U ; \mathbb{R})$ such that $\exp(u) \cdot h$ is a K\"ahler metric on  $U$. Equivalently,
   there exists a $(1,0)$-form $\alpha$ such that there holds
   \begin{align*}
    \partial \omega = \alpha \wedge \omega \; .
   \end{align*}
  \end{defi}
  
 Thanks to the previous definition, 
 we now recall the statement of a result \cite[Lemma 2 and thereafter]{liyauzhengcag94}, 
 for which we give a proof alternative to the original one.
  
  \begin{lemma}
  \label{lemma:pfimpliesbalancedorlcK}
   Let $(M , \, h)$ be compact Hermitian manifold. If $h$ is projectively flat, then either $h$ 
   is balanced or $h$ is locally conformally K\"ahler.
  \end{lemma}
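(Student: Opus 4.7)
The plan is to combine two consequences of projective flatness. The first is the identity
\[
\frac{n-1}{n}\,\rho^{(1)}_{[h]} \;=\; \partial\partial^*\omega ,
\]
which was already derived in the proof of Theorem \ref{thm:negativeprojectivelyflat} (equation \eqref{equa:ricciforanypj}). The second is obtained by contracting the tensor equation \eqref{equa:projflatmetric} with $h^{k\bar{l}}$ over the base indices $k,l$:
\[
\rho^{(2)}_{[h]} \;=\; \frac{s_{[h]}}{n}\,\omega ,
\]
so that every projectively flat metric is pointwise Chern--Einstein in the second Ricci sense.

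Next I would recall the Lefschetz decomposition of the $(2,1)$-form $\partial\omega$. Let $\eta$ be the unique $(1,0)$-form satisfying $\partial(\omega^{n-1}) = \eta \wedge \omega^{n-1}$, and write
\[
\partial\omega \;=\; B + \frac{1}{n-1}\,\eta \wedge \omega ,
\]
where $B$ is primitive ($\Lambda_\omega B = 0$, equivalently $B \wedge \omega^{n-2} = 0$). By construction $h$ is balanced if and only if $\eta \equiv 0$, and by the alternative characterization in Definition \ref{defi:lcK} the metric $h$ is locally conformally K\"ahler if and only if $B \equiv 0$. The lemma therefore reduces to the claim that projective flatness forces the dichotomy $B \equiv 0$ or $\eta \equiv 0$.

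The core step is to establish, at every point $p \in M$, the pointwise dichotomy $B(p) = 0$ or $\eta(p) = 0$; a connectedness argument --- supplemented if necessary by a unique-continuation property for the elliptic system satisfied by $B$ --- then upgrades this to the global statement. To obtain the pointwise dichotomy I plan to apply the first Bianchi identity for the Chern connection (which carries a torsion term whose trace is, up to a dimensional constant, the Lee--Gauduchon form $\eta$) to the projective flatness relation \eqref{equa:projflatmetric}. This produces a differential identity binding $\rho^{(1)}$ to $\eta$, which combined with $\rho^{(2)} = \frac{s}{n}\omega$ yields an algebraic constraint on the pair $(B,\eta)$ strong enough to force the dichotomy.

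The main obstacle is the algebraic bookkeeping required in this last step. As a fallback strategy, one may bypass the Bianchi computation entirely and instead work locally with a Chern-flat conformal representative $\tilde h = e^u h$ provided by Lemma \ref{lemma:pjequallcf}, invoking Boothby's theorem (Proposition \ref{prop:boothby}) to reduce the problem to an analysis of conformal modifications of left-invariant metrics on complex Lie groups, a setting where the dichotomy can be recognized more transparently.
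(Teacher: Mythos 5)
Your setup is sound: the decomposition $\partial\omega = B + \frac{1}{n-1}\eta\wedge\omega$ with $B$ primitive, together with the identifications ``balanced $\Leftrightarrow \eta\equiv 0$'' and ``locally conformally K\"ahler $\Leftrightarrow B\equiv 0$'' (in the sense of Definition \ref{defi:lcK}), correctly reduces the lemma to showing that projective flatness forces $B\equiv 0$ or $\eta\equiv 0$. Note that $B$ is exactly the object $\sigma=(\tau-\eta\wedge\phi)\otimes e^T$ appearing in the paper's proof. But the core of your argument is missing. You assert that the first Bianchi identity for the Chern connection, combined with $\rho^{(2)}=\frac{s}{n}\omega$, ``yields an algebraic constraint on the pair $(B,\eta)$ strong enough to force the dichotomy,'' without exhibiting that constraint. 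No such pointwise algebraic mechanism is known to exist here, and the actual proof is irreducibly global: one shows $\bar\partial\eta=\rho^{(1)}$ from the structure equations, deduces that $\sigma$ is a \emph{holomorphic} section of $T^*M\otimes T^*M\otimes TM$, derives the Bochner-type inequality $\sqrt{-1}\partial\bar\partial\|\sigma\|^2\geq\rho^{(1)}\|\sigma\|^2$ from projective flatness of the induced metric $H$, and then kills $\sigma$ (or $\rho^{(1)}$) using compactness --- in the paper's version, by splitting into the three Gauduchon-degree cases and applying Theorem \ref{thm:negativeprojectivelyflat}, Theorem \ref{thm:zeroprojectivelyflat}, and the maximum principle respectively. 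Nothing in your proposal replaces this vanishing argument.

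There is also a logical flaw in your globalization step: even if you had the pointwise dichotomy ``$B(p)=0$ or $\eta(p)=0$ for every $p$,'' the zero sets of $B$ and $\eta$ are merely closed, and two closed sets covering a connected manifold need not have either equal to the whole manifold; the unique-continuation property you mention ``if necessary'' is in fact essential and is itself nontrivial --- in the paper it is precisely the holomorphicity of $\sigma$ that supplies the needed rigidity. Finally, the fallback route misapplies Boothby's theorem (Proposition \ref{prop:boothby}): that is a statement about \emph{compact} Chern-flat manifolds being quotients of complex Lie groups, and it gives no local normal form for a metric that is only locally conformally Chern flat near a point. As written, the proposal identifies the right quantities but defers the entire difficulty of the lemma to unexecuted computations, so it does not constitute a proof.
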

  
  \begin{proof}
   We start with arguing as in \cite[Lemma 1]{liyauzhengcag94}: let 
   $e = (e_1, \ldots , e_n)$ be a unitary frame of the holomorphic tangent bundle of $M$,
   and let $\phi = (\phi_1  ,\ldots , \phi_n)$ be its dual coframe. Let $\theta = 
   \theta ' + \theta ''$ be the connection matrix
   under $e$ decomposed into its $(1,0)$ and $(0,1)$ parts, and $\tau= (\tau_1 , \ldots , \tau_n)$ 
   be the torsion forms under $e$, 
   where $\tau_i = \frac{1}{2} \sum_{j,k=1}^n T_{jk}^i \phi_j \wedge \phi_k$, 
   and $T_{jk}^i = - T_{kj}^i$.
   Then the torsion one-form $\eta$ is defined, via 
   $\partial \omega^{n-1} = (n-1) \eta \wedge \omega^{n-1}$, by
   \[
    \eta = \frac{1}{n-1} \sum_{j,k=1}^n T_{jk}^k \phi_j \; .
   \]
   The structure equations give
   \[
    \bar\partial \phi = \phi \wedge \theta'' , \qquad 
    \bar\partial \tau = \phi \wedge \rho^{(1)} - \tau \wedge \theta'' ,
   \]
   where $\rho^{(1)} = \rho^{(1)}_{i\bar j} \phi_i \wedge \bar{\phi}_j$.
Then 
 \[
 \bar\partial \tau_i =  \frac{1}{2}\sum_{j,k, l=1}^n \left(\bar\partial_l T^{i}_{jk}\right) \phi_j \wedge \phi_k \wedge \bar\phi_l
 +  \frac{1}{2} \sum_{j,k=1}^n T^i_{jk} \bar \partial \phi_j \wedge \phi_k 
 - \frac{1}{2}  \sum_{j,k=1}^n T^i_{jk} \phi_j \wedge \partial \phi_k =
 \]
 \[
  =  \frac{1}{2} \sum_{j,k, l=1}^n \left(\bar\partial_l T^{i}_{jk}\right) \phi_j \wedge \phi_k \wedge \bar\phi_l
  +  \frac{1}{2} \sum_{a,j,k,l=1}^n T^i_{jk} \phi_a \wedge A_{aj, \bar l} \bar \phi_l \wedge \phi_k 
 -  \frac{1}{2} \sum_{b,j,k,l=1}^n T^i_{jk} \phi_j \wedge \phi_b \wedge A_{b k , \bar l} \bar \phi_l =
 \]
 \[
  =\sum_{l=1}^n \sum_{j<k} \phi_j \wedge \phi_k \wedge \bar \phi_l 
  \left(
  \bar\partial_l T^{i}_{jk}
  + \sum_{a=1}^n T^i_{aj}A_{ka, \bar l}
  - \sum_{b=1}^n T^i_{bk}A_{jb, \bar l} \, ,
  \right)
 \]
where we used the anti symmetry of $T$ in the last equality.
 On the other hand, 
 \[
  \phi_i \wedge \rho^{(1)} - \sum_{p=1}^n\tau_p \wedge {\theta ''}_{pi} = 
  \sum_{j<k} \sum_{l=1}^n \phi_j \wedge \phi_k \wedge \bar \phi_l 
  \left(
  \delta_{ij} \rho^{(1)}_{k\bar l} - \delta_{ik} \rho^{(1)}_{i \bar l}
  - T^{p}_{jk}A_{pi, \bar l} 
  \right) \, .
 \]
 From this we get, as in \cite[Lemma 1]{liyauzhengcag94},
 \[
  \bar\partial_l T^{k}_{jk} = -(n-1)\rho^{(1)}_{j\bar l}
  + T^k_{ak}A_{ja, \bar l} \, ,
 \]
which amounts to $\bar \partial \eta = \rho^{(1)}$.
 Let us consider, still following \cite{liyauzhengcag94},
the section $\sigma = (\tau - \eta \wedge \phi) \otimes e^T$ of 
$T^* M \otimes T^* M \otimes TM$.
 Again from the structure equations, using that $\bar \partial \eta = \rho^{(1)}$, 
 it follows that $\sigma$ is a holomorphic section.
  Let $H$ be the Hermitian metric on $T^* M \otimes T^* M \otimes TM$ 
  induced by the projectively flat metric $h$ on the tangent bundle.
   Let $v_1 , \ldots , v_n$ be a local holomorphic frame around $x$.
  At $x$, we request that $H_{i\bar j } = \delta_{ij}$ and $dH_{i\bar j} = 0$.
 As $\| \sigma \|^2 = \sigma_i H_{i\bar j} \sigma_{\bar j}$, since the hypothesis
 on $h$ being projectively flat implies that $H$ is projectively flat as well,
 \begin{align}\label{equa: inequality LYZ}
  \sqrt{-1} \partial \bar \partial \| \sigma \|^2_{|x} \geq \rho^{(1)} \|\sigma \|^2 \; .
 \end{align}
We would like to conclude that either $\sigma = 0$ or $\rho^{(1)}= 0$; we give now an argument alternative 
to the original one. We have three cases, according to the Gauduchon degree of the projectively flat
metric under consideration.
 The first case, of negative Gauduchon degree, does not allow any projectively flat metric 
 by means of Theorem \ref{thm:negativeprojectivelyflat}. The case of Gauduchon degree zero,
 in view of Theorem \ref{thm:zeroprojectivelyflat}, allows to conclude that $\rho^{(1)}=0$.
 Finally, in the case of positive Gauduchon degree, we apply the Gauduchon conformal method
 Lemma \ref{lem: gauduchon conformal trick}, which provides a metric $h_+$ on
 the holomorphic tangent bundle, which is still projectively
 flat being conformal to the initial projectively flat metric $h$, such that
 its scalar curvature $s_+$ is a positive function.
 Now, for  $h_+$ there holds \eqref{equa: inequality LYZ} as well, and tracing it we get to
  \[
   -\Delta_d \| \sigma \|^2 + (\eta \, , d\| \sigma \|^2)_{h_+} \geq  \| \sigma\|^2  s_+ \, ,
  \]
 where $\Delta_d$ is the Hodge Laplacian of $h_+$.
 Now, the maximum principle allows to conclude that $\sigma =0$.
  This completes the proof of the lemma.
  \end{proof}

  The next notion is going to be of primary interest in the remainder of the manuscript.
  
  \begin{defi}
   \label{defi:lcfK}
   Let $(M, \, h)$ be a Hermitian manifold.
   Then $h$ is called locally conformally flat-K\"ahler if and only if for any point $p\in M$ there exists an open neighborhood $U\subset M$
   of $p$ and a function $u\in C^{\infty}(U ; \mathbb{R})$ such that $\exp(u) \cdot h$ is both Chern flat and a K\"ahler metric on  $U$. 
  \end{defi}
  
  The following fact was already claimed in \cite[Remark (2) page 235]{vaisman}.
  
  \begin{lemma}
   \label{lemma:lcfKarelcfandlcK}
   Let $(M, \, h)$ be a compact Hermitian manifold. Then $h$ is locally conformally flat-K\"ahler if and only if it is locally conformally Chern flat
   and locally conformally K\"ahler.
  \end{lemma}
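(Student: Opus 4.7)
The plan is to handle the two implications separately. The reverse direction is immediate: if $h$ is locally conformally flat-K\"ahler, then the local conformal factor $u$ whose existence is granted by Definition \ref{defi:lcfK} simultaneously witnesses Definition \ref{defi:lcf} and Definition \ref{defi:lcK} at the chosen point.

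For the forward direction, the strategy is to show that whenever $h$ is locally conformally Chern flat and locally conformally K\"ahler, the local conformal factor producing a K\"ahler metric automatically also produces a Chern flat one, so a single $u$ does both jobs. Fix $p\in M$; by Definition \ref{defi:lcK} pick a neighborhood $U\ni p$ and $u\in C^{\infty}(U;\mathbb{R})$ with $g:=\exp(u)\cdot h$ K\"ahler on $U$. By Lemma \ref{lemma:pjequallcf}, the assumption that $h$ is locally conformally Chern flat is equivalent to $h$ being projectively flat, and the conformal invariance of the projective flatness equation (the remark immediately following Definition \ref{defi:projectively flat metric}) implies that $g$ is projectively flat on $U$ as well.

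The heart of the argument is then the algebraic observation that a K\"ahler metric which is projectively flat, on a complex manifold of dimension $n\geq 2$, is necessarily Chern flat. Indeed, combining
\[
 \Theta^{g}_{i\bar{j}k\bar{l}} = \frac{1}{n}\, \Theta^{(1),g}_{k\bar{l}}\, g_{i\bar{j}}
\]
with the K\"ahler symmetry $\Theta^{g}_{i\bar{j}k\bar{l}} = \Theta^{g}_{k\bar{j}i\bar{l}}$ forces $\Theta^{(1),g}_{k\bar{l}}\, g_{i\bar{j}} = \Theta^{(1),g}_{i\bar{l}}\, g_{k\bar{j}}$; contracting with $g^{k\bar{j}}$ and using $n\geq 2$ yields $\Theta^{(1),g}=0$, whence $\Theta^{g}=0$ by projective flatness. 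Thus $g=\exp(u)\cdot h$ is simultaneously Chern flat and K\"ahler on $U$, which is exactly Definition \ref{defi:lcfK}. The main obstacle is conceptual rather than technical: local conformal Chern flatness and local conformal K\"ahlerity are a priori witnessed by different conformal factors, and the K\"ahler symmetry of the Chern curvature is precisely what makes the projective flatness equation rigid enough that a single $u$ achieves both properties at once, with no reconciliation between the two factors required.
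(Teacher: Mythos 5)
Your proof is correct and follows essentially the same route as the paper: the forward implication is immediate, and for the converse you use Lemma~\ref{lemma:pjequallcf} to replace local conformal Chern flatness by projective flatness, pass to the local K\"ahler representative $g=\exp(u)\cdot h$, and reduce to showing that a K\"ahler projectively flat metric is Chern flat. The only divergence is at that last step: the paper simply cites \cite[(5.4.6)]{lubketeleman}, whereas you prove the fact directly from the K\"ahler symmetry $\Theta_{i\bar jk\bar l}=\Theta_{k\bar ji\bar l}$, which after contraction forces $(n-1)\,\Theta^{(1)}=0$; this is a clean, self-contained substitute for the citation (note it uses $n\ge 2$, which is harmless since the dimension-one case is degenerate).
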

 
 \begin{proof}
  Assuming that $h$ is locally conformally flat-K\"ahler, the conclusion is obvious. 
  
  Vice versa, by means of Lemma \ref{lemma:pjequallcf} we can state the assumption as: $h$ is projectively flat and locally
  conformally K\"ahler. We pick a point $p\in M$ and since $h$ is locally conformally K\"ahler
  we get an open neighborhood $U\subset M$ of $p$ such that
  $\exp(u)\cdot h$ is K\"ahler in $U$ for some $u\in C^{\infty}(U;\, \mathbb{R})$.
  Whence, in $U$ there holds that $\exp(u) \cdot h$ is both K\"ahler and projectively flat.
  Now, by \cite[(5.4.6)]{lubketeleman} $\exp(u) \cdot h$ is Chern flat in $U$. We conclude that $\exp(u)$ is a local
  conformal transformation which makes $h$ simultaneously K\"ahler and Chern flat.
  This completes the proof of the lemma.
 \end{proof}
 
 About locally conformally flat-K\"ahler manifolds, it is crucial for us to recall the structure theorem by Vaisman \cite{vaisman}
 (see also \cite[Section 6.2]{dragomirornea} for a detailed presentation).
 
 \begin{defi}
  \label{defi:locallyconformallyflatKahlerHopf}
  Let $H$ be a finite subgroup of the unitary group $U(n)$.
  Let $\gamma_0$ a linear transformation of $\mathbb{C}^n$ which commutes with each element of $H$
  and has the form 
  \begin{align*}
   \gamma_0(z) := 
   \left(
   \rho_0 \exp \left( 2\pi \sqrt{-1}\lambda_1 \right) z^1
   , \cdots, 
   \rho_0 \exp \left( 2\pi \sqrt{-1}\lambda_n \right) z^n
   \right)
   \; ,
  \end{align*}
  where $\rho_0 \in(0,1)$, $\lambda_1  ,\cdots ,\lambda_n \in \mathbb{R}$.
  Then consider the group $G$ given by
  \begin{align} \label{equa:GforHopf}
   G:= \{ \gamma \cdot \gamma_0^k \; | \; \gamma \in H \; , \; k\in \mathbb{Z}   \} \; .
  \end{align}
 A locally conformally flat-K\"ahler Hopf manifold is the quotient 
 \begin{align*}
  (\mathbb{C}^n \backslash 0)/ G \, , 
 \end{align*}
 where $G$ is a group as described in \eqref{equa:GforHopf}.
 In particular, we only consider those $G$ such that the quotient is compact.
 \end{defi}
 
 The label locally conformally flat-K\"ahler Hopf manifold is justified by the following classical result by Vaisman \cite{vaisman}.

 \begin{prop}
   \label{prop:structuretheorem}  
  Let $(M , \, h)$ be a compact Hermitian manifold of complex dimension bigger or equal to two. 
  If $h$ is locally conformally flat-K\"ahler then
  either $(M, \, h)$ is K\"ahler and flat or 
  $M$ is a locally conformally flat-K\"ahler Hopf manifold and $h$ is globally conformally
  to the Boothby metric, whose coefficients are given by
  \begin{align*}
   h_{i \bar j} = \frac{4\delta_{ij}}{|z|^2} \; .
  \end{align*}
   
 \end{prop}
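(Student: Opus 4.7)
The plan is to exploit the universal cover $\pi: \tilde M \to M$ to promote local conformal transformations to a single global one. By definition, lcfK means that around each point of $M$ there is a conformal factor making $h$ simultaneously Kähler and Chern flat. The obstruction to gluing these factors globally lies in $H^1(M;\R)$, hence pulls back to zero on the simply-connected $\tilde M$; one obtains $\tilde u \in C^\infty(\tilde M;\R)$ such that $\tilde g := \exp(\tilde u)\pi^*h$ is simultaneously Kähler and Chern flat. In the Kähler setting the Chern connection coincides with the Levi-Civita connection, so $\tilde g$ is a flat Kähler metric on $\tilde M$.

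Next I would invoke the classical structure theorem for simply-connected flat Kähler manifolds: there exists a developing map $\Phi: \tilde M \to \C^n$ which is a local holomorphic isometry with respect to $\tilde g$ and the standard flat Kähler metric $g_0$. The deck group $\Gamma = \pi_1(M)$ acts on $\tilde M$ by biholomorphic $\pi^*h$-isometries, and transferring via $\Phi$ this becomes an action on $\Phi(\tilde M) \subset \C^n$ by conformal biholomorphisms of $g_0$. For $n \geq 2$, Liouville-type rigidity identifies such maps with the complex similarities $z \mapsto \rho A z + b$, where $\rho > 0$, $A \in U(n)$, and $b \in \C^n$.

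The classification now splits into two cases according to whether every $\gamma \in \Gamma$ has $\rho = 1$. In the purely isometric case, $\tilde g$ is $\Gamma$-invariant and descends to a flat Kähler metric on $M$ globally conformal to $h$, placing us in the Kähler-flat alternative. Otherwise, pick $\gamma_0 \in \Gamma$ with $\rho_0 \neq 1$; after a translation its unique fixed point becomes $0$. Proper discontinuity together with compactness of $M$ and the dynamics of the contraction $\gamma_0$ force $\Phi(\tilde M) = \C^n \setminus \{0\}$; after diagonalizing $\gamma_0$ by a unitary change of coordinates, every other element of $\Gamma$ commutes with $\gamma_0$ and fixes $0$, hence belongs to a finite subgroup $H \subset U(n)$. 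This realizes $M = (\C^n \setminus 0)/G$ for a group $G$ of the form \eqref{equa:GforHopf}, i.e., as a locally conformally flat-Kähler Hopf manifold.

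It remains to identify $h$ with the Boothby metric up to global conformality. The metric $h_{i\bar j} = 4\delta_{ij}|z|^{-2}$ is pointwise conformal to $g_0$ and invariant under every complex similarity of $\C^n$, hence in particular under $G$, so it descends to $M$. Since $\pi^*h$ is also pointwise conformal to $\Phi^*g_0 = \tilde g$, the lifts of $h$ and of the Boothby metric to $\tilde M$ lie in the same conformal class, and the triviality of the corresponding Lee-form class on $M$ (established in the first paragraph) shows that this conformality descends globally. The main technical obstacle is the rigidity step $\Phi(\tilde M) = \C^n \setminus \{0\}$: ruling out smaller $\Gamma$-invariant open subsets of $\C^n$ requires a careful use of compactness of $M$ together with the asymptotic dynamics of $\gamma_0$, and is the point where Vaisman's original argument does the heaviest lifting.
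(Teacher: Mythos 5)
The paper offers no proof of this proposition: it is quoted as Vaisman's structure theorem \cite{vaisman} (see also \cite{dragomirornea}), so there is no internal argument to measure yours against. Your sketch follows what is essentially Vaisman's own strategy --- pass to the universal cover, kill the pulled-back Lee class there to obtain a global conformal factor, develop the resulting flat K\"ahler metric into $\C^n$, and analyse the holonomy inside the group of complex similarities --- and the opening reductions (exactness of the pulled-back Lee form on $\tilde M$, coincidence of Chern and Levi-Civita connections in the K\"ahler case, Liouville-type rigidity identifying the holonomy with similarities $z\mapsto \rho Az+b$) are all sound. The genuine gap is exactly the step you flag yourself: the claim that ``proper discontinuity together with compactness of $M$ and the dynamics of $\gamma_0$ force $\Phi(\tilde M)=\C^n\setminus\{0\}$''. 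For an incomplete flat conformal structure the developing map need be neither injective nor surjective, and proving that $\Phi$ is a diffeomorphism onto $\C^n$ (isometric case) or onto $\C^n\setminus\{0\}$ (contracting case) --- so that the holonomy really presents $M$ as the stated quotient --- is the entire content of the theorem. Deferring this to ``Vaisman's original argument'' leaves the proof incomplete precisely where it matters. A related unproved assertion is that every element of $\Gamma$ commutes with $\gamma_0$: what comes for free is only that the linear parts normalize the finite group $H$, and the commutation required by Definition \ref{defi:locallyconformallyflatKahlerHopf} must be argued.

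A smaller but real error occurs in the last paragraph: you invoke ``the triviality of the corresponding Lee-form class on $M$ (established in the first paragraph)'' to descend the conformal factor relating $h$ to the Boothby metric. The first paragraph only shows the Lee class becomes exact on $\tilde M$; on a Hopf manifold the Lee class on $M$ itself is nontrivial (otherwise $h$ would be globally conformally K\"ahler, which is impossible on a manifold with $b_1$ odd, cf.\ the balanced obstruction of \cite{michelsohn} used in Corollary \ref{cor:refinement}). Fortunately no such input is needed: once $\Phi(\tilde M)=\C^n\setminus\{0\}$ is established, both $\pi^*h$ and the lift of the Boothby metric are $\Gamma$-invariant and pointwise conformal, so their ratio is a $\Gamma$-invariant positive function and descends automatically. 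Replace the Lee-class appeal by this observation, and supply the developing-map rigidity, and the argument closes.
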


We are ready to state the main result of this section.

\begin{thm}
 \label{thm:positiveprojectivelyflat}
 Let $(M, \, h)$ be a compact Hermitian manifold. Then it is positive projectively flat
 if and only if is a locally conformally flat-K\"ahler Hopf manifold endowed with a metric 
 globally conformal to the Boothby metric.
\end{thm}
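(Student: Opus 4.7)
The strategy is to chain together the previously established results, with Lemma \ref{lemma:pfimpliesbalancedorlcK}, Proposition \ref{thm:zeroprojectivelyflat} and Vaisman's structure theorem (Proposition \ref{prop:structuretheorem}) doing all the work. Both directions are essentially bookkeeping once these tools are lined up correctly.

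For the forward direction, assume $h$ is positive projectively flat on the compact Hermitian manifold $(M,h)$. Lemma \ref{lemma:pfimpliesbalancedorlcK} forces $h$ to be balanced or locally conformally K\"ahler. The balanced alternative is excluded immediately, because projectively flat plus balanced is precisely condition (ii) of Proposition \ref{thm:zeroprojectivelyflat}, which would make $h$ zero projectively flat, contradicting the exclusivity of the trichotomy in Definition \ref{defi:negzeroposprojflat}. Hence $h$ is locally conformally K\"ahler; being also locally conformally Chern flat by Lemma \ref{lemma:pjequallcf}, Lemma \ref{lemma:lcfKarelcfandlcK} upgrades this to locally conformally flat-K\"ahler. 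Vaisman's structure theorem then leaves two possibilities: $(M,h)$ is K\"ahler and flat, or $M$ is a locally conformally flat-K\"ahler Hopf manifold with $h$ globally conformal to the Boothby metric. The first option is excluded since a flat K\"ahler metric is trivially Chern flat, hence globally conformally Chern flat, hence zero projectively flat by the implication (iii)$\Rightarrow$(i) of Proposition \ref{thm:zeroprojectivelyflat}, again contradicting positivity.

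For the reverse direction, assume $M = (\mathbb{C}^n\setminus\{0\})/G$ is a locally conformally flat-K\"ahler Hopf manifold and $h$ is globally conformal to the Boothby metric $h_0$ given by $(h_0)_{i\bar j} = 4\delta_{ij}/|z|^2$. Since $h_0$ is pointwise conformal to the flat Euclidean metric on $\mathbb{C}^n\setminus\{0\}$, it is locally conformally Chern flat, hence projectively flat by Lemma \ref{lemma:pjequallcf}; conformal invariance then gives that $h$ is projectively flat as well. To certify positive Gauduchon degree, I would show that $h_0$ itself is the distinguished representative with positive scalar curvature, via the direct computation
\[
\rho^{(1)}_{[h_0]} = -\partial\bar\partial\log\det h_0 = n\,\partial\bar\partial\log|z|^2,
\]
followed by contraction with $h_0^{-1}=(|z|^2/4)\,\id$, yielding the constant value $s_{[h_0]} = n(n-1)/4 > 0$. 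Since $h_0$ is manifestly invariant under the $G$-action of Definition \ref{defi:locallyconformallyflatKahlerHopf}, it descends to $M$ with the same positive constant scalar curvature, so the Gauduchon degree of $\{h_0\}=\{h\}$ is strictly positive and $h$ is positive projectively flat.

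The only mildly substantive step is the scalar-curvature computation for the Boothby metric in the reverse direction; apart from that, the main subtlety is simply to remember to deploy Proposition \ref{thm:zeroprojectivelyflat} in \emph{both} of its contrapositive roles in the forward direction, namely to rule out the balanced branch of Lemma \ref{lemma:pfimpliesbalancedorlcK} and to rule out the K\"ahler-flat branch of Vaisman's structure theorem. No new analytic ingredient beyond what is already in the previous sections is required.
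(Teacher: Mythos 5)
Your proposal is correct and follows essentially the same chain as the paper: Lemma \ref{lemma:pfimpliesbalancedorlcK} plus Proposition \ref{thm:zeroprojectivelyflat} to discard the balanced branch, then Lemmas \ref{lemma:pjequallcf} and \ref{lemma:lcfKarelcfandlcK} to reach locally conformally flat-K\"ahler, then Vaisman's structure theorem. You are in fact slightly more careful than the paper in two places --- explicitly excluding the K\"ahler-flat alternative of Proposition \ref{prop:structuretheorem} via (iii)$\Rightarrow$(i) of Proposition \ref{thm:zeroprojectivelyflat}, and actually carrying out the Boothby scalar-curvature computation $s_{[h_0]}=n(n-1)/4>0$ that the paper dismisses as ``manifest'' --- and both of those checks are correct.
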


\begin{proof}
 If $(M , \, h)$ is a locally conformally flat K\"ahler Hopf manifold endowed with the Boothby metric,
then by direct computation on the Boothby metric it is manifestly a positive projectively flat metric.
 
 Vice versa, let $(M , \, h)$ be positive projectively flat. 
 By Lemma \ref{lemma:pfimpliesbalancedorlcK}, $h$ is either balanced or
 locally conformally K\"ahler. We exclude that $h$ is balanced by means of Theorem \ref{thm:zeroprojectivelyflat}; 
 whence necessarily $h$ is locally conformally K\"ahler. 
 By Lemma \ref{lemma:pjequallcf} $h$ is also locally conformally Chern flat. 
 We deduce by means of Lemma \ref{lemma:lcfKarelcfandlcK} that $h$ is locally conformally flat-K\"ahler. 
 Finally, the structure theorem Proposition \ref{prop:structuretheorem} 
 allows us to conclude that $(M , \, h)$ is a locally conformally flat-K\"ahler Hopf manifold endowed with the
 Boothby metric.
\end{proof}

\section{Applications}
In \cite{liyauzhengcag94}, where the label projectively flat metric has exactly the same meaning as in the present
manuscript, the class of similarity Hopf manifolds \cite{fried} was considered as possibly admitting projectively flat metrics. 
Our first application is a refinement of their description 
given in \cite[Theorem 1]{liyauzhengcag94}.

\begin{defi}
 \label{defi:complexsimilaritymanifolds}
A compact complex manifold $(M , \, J)$ is called similarity Hopf manifold 
if and only if it is a finite undercover of a Hopf manifold of the form
$(\mathbb{C}^n \backslash 0) / <\phi>$, where $\phi(z)= azA$ is a complex expansion: $A\in U(n), \, a>1 , \, z=(z_1 , \ldots , z_n)$.
\end{defi}

 \begin{rem}
  Let $M$ be any finite undercover of a K\"ahlerian complex torus; then, thanks to \cite{demaillyhwangpeternell} we know that $M$ is K\"ahlerian, and
  whence we can apply to it our Theorem \ref{thm:zeroprojectivelyflat2}. It would be interesting to investigate the behavior of the
  first and the second Chern classes on such manifolds. Moreover, if the flat complex torus is not K\"ahlerian, what is possible to say about its
  finite undercovers?
 \end{rem}

\begin{cor}
 \label{cor:refinement}
 The only similarity Hopf manifolds which admit projectively flat metric 
 are locally conformally flat-K\"ahler Hopf manifolds endowed with the Boothby metric.
 In particular, if the Boothby metric is not invariant with respect to some $G$
 as in Definition \eqref{defi:lcfK}, then such locally conformally flat-K\"ahler 
 Hopf manifold is not projectively flat.
\end{cor}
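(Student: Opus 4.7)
\emph{Plan.} My strategy is to apply the trichotomy of Definition \ref{defi:negzeroposprojflat} to any projectively flat metric $h$ on a similarity Hopf manifold $M$ (of complex dimension $n\geq 2$), rule out the negative and zero classes, and then invoke Theorem \ref{thm:positiveprojectivelyflat} to land in the positive class, which directly characterises $(M,h)$ as a locally conformally flat-K\"ahler Hopf manifold with the Boothby metric.

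The negative case is excluded at once by Theorem \ref{thm:negativeprojectivelyflat}. For the zero case, Theorem \ref{thm:zeroprojectivelyflat} gives that $h$ is globally conformally Chern flat, so by Proposition \ref{prop:boothby} the underlying manifold $M$ is a compact quotient of a complex Lie group. Since the universal cover of a similarity Hopf manifold, for $n\geq 2$, is $\mathbb{C}^{n}\setminus\{0\}\simeq \mathbb{R}\times S^{2n-1}$ (which is simply connected), this would force $\mathbb{C}^{n}\setminus\{0\}$ to admit the structure of a simply connected complex Lie group. The exclusion can be argued via a Mostow-type decomposition: a simply connected complex Lie group $G$ of complex dimension $n$ is diffeomorphic to $K\times \mathbb{R}^{d}$ for $K$ a simply connected compact Lie group and $d=2n-\dim_{\mathbb{R}} K$; comparison with $\mathbb{R}\times S^{2n-1}$ forces $K\simeq S^{2n-1}$ and $d=1$. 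The only sphere that is a simply connected compact Lie group is $S^{3}=SU(2)$, reducing to the case $n=2$, which in turn is excluded by noting that the only complex Lie algebras in complex dimension two (abelian and affine) integrate to simply connected complex Lie groups topologically equivalent to $\mathbb{R}^{4}$, not to $\mathbb{R}\times S^{3}$.

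The elimination above leaves $h$ necessarily positive projectively flat, and Theorem \ref{thm:positiveprojectivelyflat} combined with Proposition \ref{prop:structuretheorem} yields that $(M,h)$ is a locally conformally flat-K\"ahler Hopf manifold carrying a metric globally conformal to the Boothby metric. For the ``in particular'' clause, I observe that the Boothby metric $h_{i\bar j}=4\delta_{ij}/|z|^{2}$ on $\mathbb{C}^{n}\setminus\{0\}$ descends to the quotient $(\mathbb{C}^{n}\setminus\{0\})/G$ if and only if it is $G$-invariant; equivalently, every element of $G$ must act as a conformal unitary $z\mapsto \rho U z$ with $\rho>0$ and $U\in U(n)$ in the standard basis. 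If $G$ contains any element not of this form, the Boothby metric does not descend, and the first part of the corollary then rules out any projectively flat metric on the quotient.

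The main obstacle is the purely topological step that $\mathbb{C}^{n}\setminus\{0\}$ ($n\geq 2$) does not underlie any connected complex Lie group; the sketch above is one route, and an alternative is to invoke the classical Matsushima--Morimoto theorem that every connected complex Lie group is holomorphically convex, whereas $\mathbb{C}^{n}\setminus\{0\}$ fails to be so by Hartogs extension.
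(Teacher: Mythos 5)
Your proposal is correct, and the overall skeleton (trichotomy, exclude negative via Theorem \ref{thm:negativeprojectivelyflat}, exclude zero, conclude via Theorem \ref{thm:positiveprojectivelyflat} and Proposition \ref{prop:structuretheorem}) matches the paper; but your exclusion of the zero class goes by a genuinely different route. The paper uses the other leg of Proposition \ref{thm:zeroprojectivelyflat}: zero projectively flat implies \emph{balanced}, and pulling the fundamental form back to the covering Hopf manifold produces a balanced metric there, contradicting Michelsohn \cite{michelsohn}. You instead use the ``globally conformally Chern flat'' leg, feed it into Boothby's theorem (Proposition \ref{prop:boothby}), and derive a contradiction from the topology of the universal cover: $\mathbb{C}^n\setminus\{0\}\simeq\mathbb{R}\times S^{2n-1}$ cannot underlie a simply connected complex Lie group, by the Iwasawa--Malcev decomposition $K\times\mathbb{R}^d$ and the classification of compact Lie groups with the homotopy type of a sphere. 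Your argument is self-contained modulo standard Lie theory and is arguably more robust (it never mentions balanced metrics), while the paper's is shorter and stays entirely within Hermitian geometry by outsourcing the non-existence statement to \cite{michelsohn}. One caution on your proposed alternative shortcut: the assertion that \emph{every} connected complex Lie group is holomorphically convex is false (noncompact Cousin-type groups carry only constant holomorphic functions), so you should invoke the correct statement that \emph{simply connected} complex Lie groups are Stein, which suffices here since the relevant group is the universal cover; your primary Mostow-type argument, however, stands as written. Your treatment of the final clause agrees with the paper, which simply declares it obvious.
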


\begin{proof}
 Let $M$ be a similarity Hopf manifold with projectively flat metric $h$.
 Assume by contradiction that $h$ is zero projectively flat.
 Then, by Theorem \ref{thm:zeroprojectivelyflat}, we have that $h$ is balanced. 
 Now, pulling back $\omega$ (the fundamental $(1,1)$-form of $h$) to the base Hopf manifold
 via the finite covering map, 
 we get a balanced metric on a Hopf manifold which is excluded in \cite{michelsohn}.
 So we have that $h$ is positively projectively flat, and we apply \eqref{thm:positiveprojectivelyflat} 
 to get the claimed structure of $(M,g)$.
 The last sentence of the statement is obvious.
\end{proof}

\begin{rem}
 We wouldn't be surprised to learn that the previous result was already stated in literature. We acknowledge that
  in \cite[pages 180-181]{lubketeleman} the authors refer to \cite{jostyau,liyauzhengcag94} 
  for a description of projectively flat metrics in dimension bigger or equal to three.
\end{rem}

The next application deals with projectively 
flat astheno-K\"ahler metrics (compare \cite[Corollary 3]{liyauzhengcag94}).

\begin{defi}
 \label{defi:asthenokahler}
 Let $(M, \, h)$ be a Hermitian manifold; let $\omega$ be the fundamental $(1,1)$-form corresponding to $h$.
 Then $h$ is called astheno-K\"ahler metric if and only if $\partial\bar \partial (\omega^{n-2}) = 0$.
\end{defi}

\begin{cor}
\label{cor:asthenoarekahler}
 Let $(M, \, h)$ be a compact Hermitian manifold of complex dimension bigger or equal to three. 
 If $h$ is astheno-K\"ahler and projectively flat, then $h$ is K\"ahler and zero projectively flat.
\end{cor}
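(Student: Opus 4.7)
My plan is to exploit the trichotomy of Definition~\ref{defi:negzeroposprojflat}. Since Theorem~\ref{thm:negativeprojectivelyflat} already excludes the negative case, it suffices to rule out the positive case using the astheno-K\"ahler hypothesis, and then to upgrade the resulting zero case from balanced to K\"ahler.

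To exclude the positive case, I would argue by contradiction: suppose $h$ is positive projectively flat. By Theorem~\ref{thm:positiveprojectivelyflat}, $M$ is a locally conformally flat-K\"ahler Hopf manifold $(\mathbb{C}^n\setminus\{0\})/G$ and $h=e^v h_B$ for some $v\in C^\infty(M;\mathbb{R})$. Passing to the simply connected universal cover $\mathbb{C}^n\setminus\{0\}$ (simple connectivity requires $n\geq 2$), I can write $\omega=f\omega_0$ with $f=4e^v/|z|^2$ and $\omega_0$ the Euclidean K\"ahler form. Since $d\omega_0=0$ and the Lefschetz operator $\alpha\mapsto \alpha\wedge\omega_0^{n-2}$ is an isomorphism from $(1,1)$-forms to $(n-1,n-1)$-forms, the astheno-K\"ahler condition $\partial\bar\partial\omega^{n-2}=\partial\bar\partial(f^{n-2})\wedge\omega_0^{n-2}=0$ becomes equivalent to $f^{n-2}$ being pluriharmonic. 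On the simply connected $\mathbb{C}^n\setminus\{0\}$, the positive pluriharmonic $f^{n-2}$ is the real part of some holomorphic $H$; by Hartogs's extension theorem, $H$ extends holomorphically across the origin, so $\Re H$ stays bounded near $0$. This contradicts the blow-up $f^{n-2}\sim c|z|^{-2(n-2)}\to\infty$ at the origin, guaranteed by the hypothesis $n\geq 3$ together with the boundedness of $v$ on the compact manifold $M$.

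Therefore $h$ must be zero projectively flat, hence balanced by Proposition~\ref{thm:zeroprojectivelyflat}. To upgrade to K\"ahlerness, I would integrate the identity
\[
d\bigl(\bar\partial\omega^{n-2}\wedge\omega\bigr)=\partial\bar\partial\omega^{n-2}\wedge\omega-\bar\partial\omega^{n-2}\wedge d\omega
\]
over $M$ (the minus sign from the odd degree $2n-3$), use Stokes together with the astheno-K\"ahler vanishing of the first term and the bidegree vanishing $\bar\partial\omega^{n-2}\wedge\bar\partial\omega=0$, to obtain
\[
\int_M \partial\omega\wedge\bar\partial\omega\wedge\omega^{n-3}=0.
\]
The balanced identity $\partial\omega\wedge\omega^{n-2}=0$ asserts that $\partial\omega$ is a primitive $(2,1)$-form, and the standard pointwise identity for such primitive forms $\alpha$,
\[
-i\,\alpha\wedge\bar\alpha\wedge\frac{\omega^{n-3}}{(n-3)!}=|\alpha|^2\,\frac{\omega^n}{n!},
\]
then forces $\partial\omega=0$ identically, so $h$ is K\"ahler. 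Combined with the previously established zero projective flatness (which is preserved since K\"ahler implies balanced), the corollary follows.

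The main obstacle is the positive-case exclusion: one has to work on the universal cover with the \emph{full} conformal factor $e^v\cdot 4/|z|^2$ (not merely the pure Boothby metric) and invoke Hartogs's theorem to obstruct the required pluriharmonicity. The dimensional assumption $n\geq 3$ is crucial both for the blow-up of $f^{n-2}$ at the origin and for the nontriviality of the astheno-K\"ahler equation.
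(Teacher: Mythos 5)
Your proposal is correct and follows the same overall skeleton as the paper (run through the trichotomy of Definition~\ref{defi:negzeroposprojflat}, kill the negative case by Theorem~\ref{thm:negativeprojectivelyflat}, kill the positive case using the astheno-K\"ahler hypothesis, then upgrade the zero/balanced case to K\"ahler), but you execute the two nontrivial exclusions quite differently. For the positive case the paper simply asserts, citing Vaisman's explicit formula, that the metrics on locally conformally flat-K\"ahler Hopf manifolds ``are not astheno-K\"ahler''; you instead lift to the universal cover, reduce $\partial\bar\partial\omega^{n-2}=0$ to pluriharmonicity of $f^{n-2}$ via injectivity of $L^{n-2}$ on $(1,1)$-forms, and derive a contradiction from Hartogs extension versus the blow-up of $|z|^{-2(n-2)}$ at the origin. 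This is more work, but it is arguably more complete: since astheno-K\"ahler is not a conformally invariant condition, one really must rule out \emph{every} global conformal rescaling $e^{v}h_B$ of the Boothby metric, and your argument handles the arbitrary bounded factor $e^{v}$ explicitly where the paper's one-line claim glosses over it. For the final step the paper cites Matsuo--Takahashi for ``compact balanced astheno-K\"ahler implies K\"ahler,'' whereas you reprove that result directly by integrating $d(\bar\partial\omega^{n-2}\wedge\omega)$, using the bidegree vanishing of $\bar\partial\omega^{n-2}\wedge\bar\partial\omega$, the primitivity of $\partial\omega$ coming from $\partial\omega\wedge\omega^{n-2}=0$, and the Weil sign identity for primitive $(2,1)$-forms; this is essentially the content of the cited reference, made self-contained. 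Both of your substitutions are sound (the signs in your Stokes identity and in the primitive-form identity check out), so the net effect is a longer but more self-contained and, on the positive-case exclusion, more rigorous proof than the one in the paper.
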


\begin{proof}
 In view of Theorems \ref{thm:negativeprojectivelyflat}, \ref{thm:zeroprojectivelyflat}, and \ref{thm:positiveprojectivelyflat}
 we have to exclude that $M$ is a locally conformally K\"ahler flat Hopf manifold and that $M$ is non-K\"ahlerian zero projectively flat.
 About the first claim, by \cite{vaisman} we have an explicit expression of the form of such metrics on those Hopf manifolds, 
 which are not astheno-K\"ahler. 
 Whence, now we know that $h$ is zero projectively flat. 
 About the second claim, by Theorem \ref{thm:zeroprojectivelyflat},
 we know that $h$ is balanced. Then, by \cite{matsuotakahashi} we conclude that $h$ is K\"ahler.
 This completes the proof of the corollary.
\end{proof}

\begin{rem}
 Since any Hermitian metric on a Riemann surface or a complex surface is astheno-K\"ahler, Corollary \ref{cor:asthenoarekahler} doesn't
 hold in dimension one and two. In particular, in dimension two K\"ahler projectively flat metrics are zero projectively flat; whence all
 the counterexamples in dimension two are given by locally conformally flat-K\"ahler Hopf manifolds.
\end{rem}

\begin{rem}
 Since any Hermitian metric on a Riemann surface or a complex surface is astheno-K\"ahler, Corollary \ref{cor:asthenoarekahler} doesn't
 hold in dimension one and two. In particular, in complex dimension two K\"ahler projectively flat metrics are zero projectively flat; whence all
 the counterexamples in dimension two are given by locally conformally flat-K\"ahler Hopf manifolds.
\end{rem}

\end{document}